\numberwithin{equation}{section}
\theoremstyle{plain}
\newtheorem{theorem}{Theorem}[section]
\newtheorem{lemma}[theorem]{Lemma}
\newtheorem{claim}{Claim}
\theoremstyle{definition}
\newtheorem{remark}[theorem]{Remark}
\title{Liouville type theorem for a class of quasilinear $p$-Laplace type equations in the half space}
\begin{document}
\author{Bao Yu }
\address{School of Mathematical Sciences, University of Science and Technology of China, Hefei, Anhui Province, P. R. China, 230006}
\email{baoyu1@mail.ustc.edu.cn}
\author{Yang Zhou}
\address{School of Mathematical Sciences, University of Science and Technology of China, Hefei, Anhui Province, P. R. China, 230006}
\email{zy19700816@mail.ustc.edu.cn}
\maketitle
\begin{abstract}
	We use the method of vector fields to obtain a Liouville-type theorem for a class of quasilinear $p$-Laplace type equations with Conormal boundary condition in the half space. These $p$-Laplace type equations are the subcritical case of the Euler-Lagrange equation of the Sobolev trace inequality in the half space.
	\vskip0.3cm
	\noindent{\bfseries Keywords:}{ Subcritical equations, Lioullive theorem, Quasilinear elliptic equations, Half space, Conormal boundary condition.}\\
\end{abstract}
\section{Introduction}
The research for Liouville-type theorem for the nonnegative solution to the following semilinear elliptic equations
\begin{equation}\label{1.1a}
	\Delta u+ u^q=0\,\,\,\,\,\textrm{in}\quad \mathbb{R}^{n}
\end{equation} 
in the range of $1 < q < 2^* - 1$ where $2^* = \frac{2n}{n-2}$ started in the splendid paper by Gidas and Spruck \cite{GS81}, and they found that these equations had no positive solution.   Gidas-Spruck \cite{GS81} proved their results via the method of vector fields and integral by parts motivated by Obata identity \cite{Obata71}. \\

The equation (\ref{1.1a}) had been studied intensively by many authors in
decades. In fact, for the critical case $q = 2^*-1$, it comes from the Yamabe problem on $\mathbb{R}^{n}$ and is also the Euler-Lagrange equation of Sobolev inequality. It's well known that the Aubin-Talenti bubbles (\ref{1}) are solutions of \eqref{1.1a} when $q = 2^*-1$ and are the minimizers of Sobolev inequality.
 \begin{equation}\label{1}
 	u(x)=\bigg(\frac{\lambda \sqrt{n(n-2)}}{\lambda^2+|x-x_0|^2}\bigg)^{\frac{n-2}{2}},~~ \lambda>0,~~ x_0\in \mathbb{R}^{n}.
 \end{equation}
Under the additional hypothesis $u(x)=O(|x|^{2-n})$, Obata \cite{Obata71} used vector fields method to show that the solution of \eqref{1.1a} must be of the form \eqref{1}. Later, Gidas, Ni and Nirenberg \cite{GNN79} developed the moving planes technique and also obtained this result under the same assumption. In 1989, Caffareli, Gidas and Spruck \cite{CGS89} (also seen in \cite{CL91}) used Kelvin
transform and the moving sphere method to classify the solution of \eqref{1.1a} without additional hypothesis. However, the moving sphere method relies on the inversion invariance of the Laplace equations, and fails in the $p$-Laplacian equations.\\

For the $p$-Laplacian equations
\begin{equation} \label{2}
	\Delta_p u+ u^q=0\,\,\,\,\,\textrm{in}\,\quad \mathbb{R}^{n},
\end{equation}
 Serrin and Zhou \cite{Serrin02} proved the similar Liouville theorem for the $p$-Laplacian in the subcritical case $p-1<q<p^*-1$ by using the method of vector fields motivated by Obata, where $p^* = \frac{np}{n-p}$. \\

 As for the critical case $q=p^*-1$, under the assumption of finite energy
 \begin{equation}
     u \in D^{1,p}(\mathbb{R}^n) :=\{u\in L^{p^*}(\mathbb{R}^n), Du \in L^p(\mathbb{R}^n) \}
 \end{equation}
 Vetois \cite{Vetois} and Sciunzi \cite{Sciunzi} used asymptotic analysis and moving plane method to prove that the positive solutions of the equation \eqref{2} must be of the following form
 \begin{equation}
     u(x) = \bigg(\frac{\lambda^{\frac{1}{p-1}}n^{\frac{1}{p}}(\frac{n-p}{p-1})^\frac{p-1}{p}}{|x-x_0|^\frac{p}{p-1}+\lambda^\frac{p}{p-1}}\bigg)^\frac{n-p}{p} ,~~ \lambda>0,~~ x_0\in \mathbb{R}^{n}.
 \end{equation}
 Note that the moving plane method relies
on the symmetries of the equation, so it can hardly work in the anisotropic setting.
In 2020, based on the
asymptotic estimates in \cite{Vetois,Sciunzi}, Ciraolo, Figalli and Roncoroni \cite{CFR20} used vector fields method to
classify positive solutions to the critical p-Laplacian equation
in an anisotropic setting.

These Liouville-type results have played a fundamental role in the study of semilinear elliptic equations with critical exponent, including the Yamabe problem, Nirenberg problem, and Sobolev inequality. \\

In this paper, we consider the following equation in the half space with Conormal boundary condition.
	\begin{equation}\label{pde1}
	\begin{cases}
		&\Delta_m u + u^p =0 , u>0,  ~~\text{in} ~~\mathbb{R}^{n}_+,\\
		&|\nabla u|^{m-2}\frac{\partial u}{\partial x_n} = -u^q, ~~\text{on} ~~ \partial \mathbb{R}^{n}_+,
	\end{cases}
\end{equation}
where $1< m<n,$ $0<p\leq \frac{nm}{n-m} -1$, $m-1<q\leq \frac{n(m-1)}{n-m}$. And $\mathbb{R}^{n}_+ =\{x=(x',x_n); x' \in \mathbb{R}^{n-1}, x_n>0\}$ denotes the Euclidean half space. 
In the critical case $p= \frac{nm}{n-m} -1, ~q= \frac{n(m-1)}{n-m}$, the equation \eqref{pde1} is the Euler-Lagrange equation associated with the following interpolation inequality between the Sobolev inequality and the Sobolev trace inequality. (see, e.g. \cite{BL95,Ne})
\begin{equation}
	\frac{||\nabla u||^m_{L^m(\mathbb{R}^{n}_+)}}{A}- \frac{|| u||^m_{L^{m^*}(\partial \mathbb{R}^{n}_+)}}{B} \geq ||u||^m_{L^{m^*}(\mathbb{R}^{n}_+)}
\end{equation}
for $m^*=\frac{nm}{n-m}$ and some appropriate constants $A>0, B>0$. Moreover, this equation is closely related to the second Yamabe problem. (see, e.g. \cite{Es92,BC14}) \\

When $m=2$ and $p,q$ are critical, that is $p=\frac{n+2}{n-2},q=\frac{n}{n-2}$,
Escobar \cite{E90} proved that any solution $u$ to the equation (\ref{pde1}) under the assumption $u(x)=O(|x|^{2-n})$ for
$n\ge 3$ and $|x |$ large, must be of the form 
\begin{equation}\label{e1.8}
	u(x',x_n)=\bigg(\frac{\mu}{1+\mu^2|x-x_0|^2}\bigg)^{\frac{n-2}{2}}, ~~~ \mu>0,~~ x_0\in \mathbb{R}^{n}_-.
\end{equation}
In 1995, Li and Zhu \cite{Li95} applied the moving sphere method to classify the positive solutions to the equation (\ref{pde1}) without additional assumption.
Later, Li and Zhang \cite{Li03} considered $m=2$ and $p,q$ subcritical, and used the moving plane method to show that the solution to (\ref{pde1}) was trivial.
For general $1<m<n$ , the second author \cite{Zhou24} established a classification of the solution for the critical case using the method of vector fields and integral by parts. \\

This paper aims to establish the Liouville theorem for the subcritical case of \eqref{pde1}. We now state our main result.

\begin{theorem}\label{Them1}
	If  $p,q$ satisfy that
	\begin{align}
		&\frac{n(m-1)}{n-m}\leq p<\frac{nm}{n-m}-1, \quad \frac{(n-1)(m-1)}{n-m}\leq q<\frac{(m-1)n}{n-m}, \quad q>\frac{n-2}{n-1}p-\frac{1}{n-1},\label{e1.9}\\
		&	\frac{(n-1)^2}{n^2}(p+1)^2 -2\bigg(\frac{n-1}{n}-(\frac{n-1}{n}-2\frac{m-1}{m})q \bigg)(p+1)+(1-q)^2 <0,
	\end{align}
	then there exists no nontrivial nonnegative $C^2$ solution of \eqref{pde1} in $\mathbb R_+^n$.
\end{theorem}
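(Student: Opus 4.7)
The plan is to adapt the vector-field method of Serrin and Zou \cite{Serrin02} to the half-space with conormal boundary condition, in the spirit of the second author's treatment \cite{Zhou24} of the critical case. The goal is to derive a small family of integral identities, combine them linearly with free parameters, and invoke the two standing conditions of the theorem to force a certain quadratic form to be sign-definite, ultimately yielding $\nabla u\equiv 0$ and hence $u\equiv 0$.

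Concretely, I would fix a radial cutoff $\eta_R$ supported in $B_R\cap\overline{\mathbb{R}^n_+}$ and test $\Delta_m u+u^p=0$ first against $u^{\alpha}\eta_R$. Integration by parts on $\mathbb{R}^n_+$ creates a boundary integral on $\partial\mathbb{R}^n_+$ that the conormal condition turns into $-\int_{\partial\mathbb{R}^n_+} u^{q+\alpha}\eta_R\,d\sigma$, giving
\[
-\alpha\int_{\mathbb{R}^n_+} u^{\alpha-1}|\nabla u|^m\eta_R\,dx+\int_{\mathbb{R}^n_+}u^{p+\alpha}\eta_R\,dx+\int_{\partial\mathbb{R}^n_+} u^{q+\alpha}\eta_R\,d\sigma=\mathrm{Err}(R).
\]
A second identity is produced by contracting the equation with a Pohozaev-type vector field $X=|\nabla u|^{m-2}(\xi\cdot\nabla u)\nabla u-\tfrac{1}{m}|\nabla u|^m\xi$ for a suitable $\xi$ (e.g.\ $\xi=u^{\beta}e_n$ or $\xi=u^{\beta}x$); expanding $\mathrm{div}\,X$, inserting the equation, and once more reducing the boundary piece via $|\nabla u|^{m-2}\partial_{x_n}u=-u^q$ produces a relation between $\int u^{\beta-1}|\nabla u|^m$, $\int u^{p+\beta}$ and a boundary term with different coefficients from the first identity. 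If needed, a third identity of the same flavour provides an additional free parameter.

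Eliminating the bulk term $\int u^{p+\alpha}\eta_R\,dx$ between the identities and tuning the parameters should lead to an inequality of the schematic form
\[
Q(p,q,m,n;\alpha)\int_{\mathbb{R}^n_+} u^{\alpha-1}|\nabla u|^m\,dx+R(p,q,m,n;\alpha)\int_{\partial\mathbb{R}^n_+}u^{q+\alpha}\,d\sigma\le 0,
\]
where $Q$ is a quadratic polynomial in $\alpha$ and $R$ is linear. The two linear bounds in \eqref{e1.9} are exactly what is needed to make $R\ge 0$ for admissible $\alpha$, while the second displayed condition of the theorem, read as a discriminant inequality in $p+1$, is the sign requirement that permits $\alpha$ to be chosen so that $Q>0$. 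Strict positivity of $Q$ and nonnegativity of $R$ then force both integrals to vanish, so $|\nabla u|\equiv 0$, and the boundary condition $u^q\equiv 0$ finishes the argument.

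The principal obstacles are regularity and boundary bookkeeping. Because $\Delta_m$ is degenerate, $u$ is only $C^{1,\alpha}$ and second derivatives are classical solely on $\{\nabla u\neq 0\}$, so every integration by parts above must be justified either by a capacity argument on this regular set or by approximating $\Delta_m$ with $\mathrm{div}((|\nabla u|^2+\varepsilon)^{(m-2)/2}\nabla u)$ and passing to the limit, as in \cite{Serrin02,CFR20,Zhou24}. Equally delicate is the handling of the extra boundary contributions generated by each integration by parts in $x_n$: they must be repackaged so that the conormal condition converts them into polynomial expressions in $u|_{\partial\mathbb{R}^n_+}$, and it is precisely this repackaging that generates the additional linear constraints in \eqref{e1.9} absent from the whole-space Serrin--Zou theorem. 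Finally, justifying $\mathrm{Err}(R)\to 0$ as $R\to\infty$ requires a priori decay of $u$, which is typically obtained by bootstrapping from the same identities tested first with a suboptimal $\alpha$.
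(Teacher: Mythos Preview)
Your proposal misses the central ingredient. The identities you list---testing against $u^\alpha\eta_R$ and a Pohozaev vector field $X=|\nabla u|^{m-2}(\xi\cdot\nabla u)\nabla u-\tfrac1m|\nabla u|^m\xi$---are both first-order in the sense that neither generates the quantity $X^i_jX^j_i$ (with $X^i=|\nabla u|^{m-2}u_i$, $X^i_j=\partial_jX^i$) that drives the argument. The paper's key step is the Bochner/Obata-type divergence
\[
\partial_i\Bigl(u^\alpha X^i_jX^j-u^\alpha\Delta_m u\,X^i+\mu\,u^{\alpha-1}|\nabla u|^mX^i-\lambda\,u^{\alpha+p}X^i\Bigr),
\]
whose expansion produces $X^i_jX^j_i$. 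One then introduces the trace-free tensors $E^i_j=X^i_j-\tfrac{\Delta_mu}{n}\delta_{ij}$ and $L_{ij}=u^{-1}|\nabla u|^{m-2}u_iu_j-\tfrac1n u^{-1}|\nabla u|^m\delta_{ij}$ and completes the square to obtain $|E^i_j+cL_{ij}|^2\ge 0$. The Pohozaev identity enters only afterwards, to convert a boundary term; by itself it cannot produce the $u^{\alpha-2}|\nabla u|^{2m}$ structure (note the exponent $2m$, not $m$) on which the whole sign analysis rests. Testing with $u^\alpha\eta_R$ alone is exactly what the paper does in Section~2 for the easier Serrin range $p<p_*$, $q<q_*$; it does not reach the range in the theorem.

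Your reading of the two hypotheses is also off. In the paper the weight is \emph{fixed} at $\alpha=1-q$; the free parameters are $\mu$ and $\lambda$, and the quadratic condition in $(p+1)$ is precisely the requirement that, with the specific choice $\mu=-\tfrac{m-1}{m}\tfrac{n-1}{n}(p+1)$, the coefficient $\mu(\alpha-1)-\tfrac{n-1}{4n}(\tfrac{m}{m-1}\mu+\alpha)^2$ of $u^{\alpha-2}|\nabla u|^{2m}$ be strictly positive. The linear constraint $q>\tfrac{n-2}{n-1}p-\tfrac1{n-1}$ is not a sign condition on a boundary coefficient at all: it arises at the very end, when a cutoff error $\int_\Omega u^{2-\alpha}|\nabla\phi|^2$ is absorbed via Young's inequality into $\int_\Omega u^{\alpha+2p}$ and one needs the residual power of $R$ to be negative. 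So neither the schematic inequality you aim for (with $\int u^{\alpha-1}|\nabla u|^m$ rather than $\int u^{\alpha-2}|\nabla u|^{2m}$) nor the roles you assign to the hypotheses match what actually makes the proof work.
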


\begin{remark}
    For example, we consider $m=4,n=8$, then (\ref{e1.9}) becomes: $6\le p<7, 5.25\le q<6, q>\frac{6p-1}{7}$.
\begin{figure}[h!]
    \centering
    \includegraphics[width=0.5\linewidth]{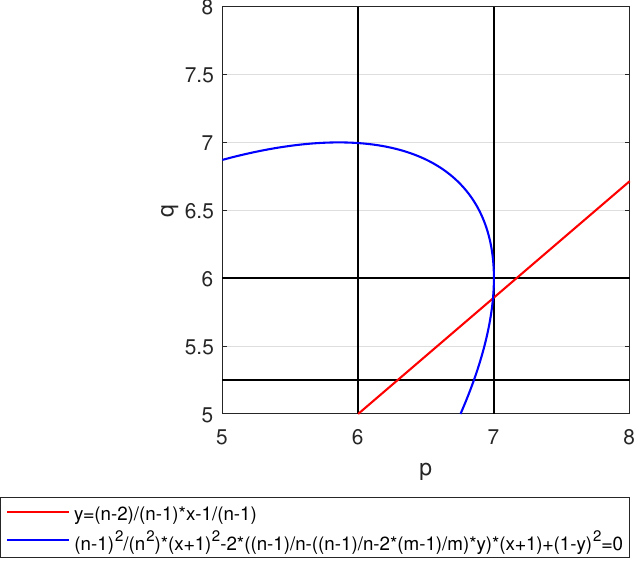}
    \caption{example: $m=4,n=8$}
    \label{fig1}
\end{figure}

\end{remark}

\begin{remark}
    It is not hard to prove the Liouville theorem in the Serrin index: $0<p < \frac{n(m-1)}{n-m}$, 
    $m-1<q< \frac{(n-1)(m-1)}{n-m}$. We handle the Serrin index in Section 2.
\end{remark}

The paper is organized as follows. In Section 2, we will introduce some notations and deduce the Liouville theorem for Serrin index. In Section 3, we will modify the integral identities used in \cite{Zhou24} to prove Theorem \ref{Them1}.

{\bf Acknowledgment:}
We would like to thank Prof. Xi-Nan Ma for his advanced guidance. This work was supported by National Natural Science Foundation of China [grant number 12141105].

\section{An Integral Equality }
In the whole paper, we denote by $B_r(x)$ the Euclidean ball centered at the point $x$ with radius $r$, and by $\Omega$ the upper half space $\mathbb{R}^n_+$.\\

First, we can deduce the following two lemmas, which show the Liouville theorem for a simple range of $p,q$.

\begin{lemma}\label{lem2.1}
	Let $u\geq 0$ satisfy $\Delta_m u + u^p =0$ in $\Omega$  and  $\frac{\partial u}{\partial x_n}\leq 0$ on $\partial \Omega$. If $0\le p < \frac{n(m-1)}{n-m}$, then $u=0$ in $\Omega$.
\end{lemma}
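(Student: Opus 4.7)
The plan is to use a Mitidieri--Pohozaev test-function argument, turning the conormal boundary condition into a favorable sign. The outward unit normal to $\partial\Omega=\{x_n=0\}$ is $\nu=-e_n$, so the hypothesis $\partial u/\partial x_n\le 0$ is equivalent to $|\nabla u|^{m-2}\,\partial u/\partial\nu\ge 0$ on $\partial\Omega$. Consequently, whenever we integrate by parts against a non-negative test function $\phi$, the resulting boundary contribution $-\int_{\partial\Omega}|\nabla u|^{m-2}\,\partial_\nu u\cdot\phi\,dS$ is non-positive and can simply be dropped, so the estimate reduces to one of full-space type.

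By Vazquez's strong maximum principle applied to $\Delta_m u=-u^p\le 0$, we may assume $u>0$ throughout $\Omega$ (otherwise $u\equiv 0$ and we are done). Fix a small parameter $\gamma\in(0,m-1)$, a large integer $k$, and a standard radial cutoff $\eta_R\in C_c^\infty(\mathbb{R}^n)$ with $\eta_R\equiv 1$ on $B_R$, $\operatorname{supp}\eta_R\subset B_{2R}$, and $|\nabla\eta_R|\le C/R$. Multiply the equation by the regularized test function $\phi=(u+\varepsilon)^{-\gamma}\eta_R^k$ and integrate over $\Omega$. After integrating by parts (dropping the boundary term by sign), applying Young's inequality to absorb the cross-term $k\int|\nabla u|^{m-1}(u+\varepsilon)^{-\gamma}\eta_R^{k-1}|\nabla\eta_R|$ into the diagonal $\gamma\int|\nabla u|^m(u+\varepsilon)^{-\gamma-1}\eta_R^k$, and letting $\varepsilon\to 0$ by monotone convergence, one obtains
\begin{equation*}
\int_\Omega u^{p-\gamma}\,\eta_R^k\,dx \;\le\; C\int_\Omega u^{\,m-1-\gamma}\,\eta_R^{k-m}\,|\nabla\eta_R|^m\,dx.
\end{equation*}
Under the assumption $p>m-1$, H\"older's inequality with conjugate exponents $\tfrac{p-\gamma}{m-1-\gamma}$ and $\tfrac{p-\gamma}{p-m+1}$ converts this to
\begin{equation*}
\int_\Omega u^{p-\gamma}\,\eta_R^k\,dx \;\le\; C\,R^{\,n-m(p-\gamma)/(p-m+1)}.
\end{equation*}
Since $p<n(m-1)/(n-m)$, the exponent of $R$ is strictly negative for $\gamma$ sufficiently small, and letting $R\to\infty$ forces $u\equiv 0$.

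The main obstacle will be the residual range $0\le p\le m-1$, where the H\"older step above is unavailable. My plan here is to reflect: the even extension $\tilde u(x',x_n):=u(x',|x_n|)$ satisfies, weakly on all of $\mathbb{R}^n$, the differential inequality $\Delta_m\tilde u+\tilde u^p\le 0$, because integration by parts across the interface $\{x_n=0\}$ contributes exactly $-2\int_{\partial\Omega}|\nabla u|^{m-2}\partial_n u\cdot\varphi\,dS\ge 0$ by the sign of $\partial_n u$. One then invokes the classical Mitidieri--Pohozaev/Serrin--Zou Liouville theorem on $\mathbb{R}^n$, which rules out non-trivial non-negative distributional supersolutions in the entire Serrin range $0\le p\le n(m-1)/(n-m)$, and this closes the gap. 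A secondary technical point, the justification of the $\varepsilon\to 0$ limit, is routine once $u>0$ has been placed by the maximum principle.
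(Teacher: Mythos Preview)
Your argument is correct and, for the main range $p>m-1$, essentially coincides with the paper's proof: both use the test function $u^{-\gamma}\eta^s$ (the paper writes $u^\alpha$ with $\alpha<0$), drop the boundary term by sign, absorb the cross-term via Young, and reach $\int u^{p-\gamma}\eta^s\le C\int u^{m-1-\gamma}|\nabla\eta|^m\eta^{s-m}$; the paper closes with a second Young inequality where you use H\"older, leading to the same exponent $n-m(p-\gamma)/(p-m+1)$.

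The only genuine divergence is in the residual range $0\le p\le m-1$. The paper stays in the half space and handles this directly by choosing $\alpha$ differently (namely $\alpha=1-m$ when $p=m-1$, and $\alpha\to-\infty$ when $p<m-1$), then applies Young once more. You instead reflect evenly across $\{x_n=0\}$, observe that the sign of $\partial_n u$ makes the extension a weak supersolution of $\Delta_m\tilde u+\tilde u^p\le 0$ on all of $\mathbb{R}^n$, and invoke the Mitidieri--Pohozaev/Serrin--Zou Liouville theorem. Your route is shorter and in fact covers the whole Serrin range in one stroke (so your first step is, strictly speaking, redundant); the paper's route is self-contained and avoids citing an external Liouville result. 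Both are valid.
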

\begin{proof}
By Serrin's strong maximum principle \cite{S1970}, if $u(x_0)=0$ for some point $x_0\in \Omega$, then $u=0$ in $\Omega$. So we assume $u>0$ in $\Omega$.

	Let $\eta$ be a cut-off function such that $\eta = 1$ in $B_R(0)$, $\eta = 0$ in $B^c_{2R}(0)$ and $0\leq\eta \leq 1$, $|\nabla \eta|\leq \frac{C}{R}$. Multiplying \eqref{pde1} by $u^\alpha \eta^s$  and integrating by parts, where $\alpha,s \in \mathbb{R}$ are to be determined, we get
	\begin{align*}
		\int_\Omega u^{\alpha+p} \eta^s dx &= -\int_\Omega \Delta_m u u^\alpha \eta^s dx\\
		&=\int_\Omega|\nabla u|^{m-2}u_i (u^\alpha \eta^s)_i dx +
		\int_{\partial \Omega} |\nabla u|^{m-2}u_n u^\alpha \eta^s d\sigma\\
		&\leq \alpha \int_\Omega |\nabla u|^m u^{\alpha-1} \eta^sdx +
		\int_\Omega |\nabla u|^{m-1} u^\alpha (\eta^s)_i dx ,
	\end{align*}
	hence 
	\begin{equation}
		\int_\Omega u^{\alpha+p} \eta^s dx -\alpha \int_\Omega |\nabla u|^m u^{\alpha-1} \eta^sdx
		\leq\int_\Omega |\nabla u|^{m-1} u^\alpha (\eta^s)_i dx,
	\end{equation}
	where $\alpha <0$ is to be determined. Then we can use the following Young's inequality
	\begin{align}
		s|\nabla u|^{m-1} u^\alpha \eta^{s-1} |\nabla \eta| &\leq \frac{-\alpha}{2} |\nabla u|^mu^{\alpha-1} \eta^s + C u^{\alpha-1 +m} \eta^{s-m} |\nabla \eta|^m
	\end{align}
    to get 
    \begin{align}
        \int_\Omega u^{\alpha+p} \eta^s dx 
		\leq C\int_\Omega u^{\alpha-1 +m} \eta^{s-m} |\nabla \eta|^m dx.\label{e2.3}
    \end{align}
	\textbf{Case 1:} If $p>m-1$, we take $\alpha<0 $ big enough, such that $0< \alpha +m -1 < \alpha +p $ ;\\
	\textbf{Case 2:} If $p=m-1$, we take $\alpha= 1-m$, such that $ \alpha +m -1 =\alpha + p =0$ ;\\
	\textbf{Case 3:} If $p<m-1$, we take $\alpha<0 $ small enough, such that $\alpha + p < \alpha+m -1 <0 $ ;\\
In the case 2, the inequality (\ref{e2.3}) becomes
\begin{align}
    R^n\le C R^{n-m},
\end{align}
   which is impossible as $R\rightarrow\infty$.\\
	In the cases 1 and 3, we can apply the following Young's inequality in (\ref{e2.3})
	\begin{equation}
		C u^{\alpha+m-1}\eta^{s-m}|\nabla \eta|^m \leq \frac{1}{2} u^{\alpha+p} \eta^s
		+C|\nabla \eta |^{m\frac{\alpha+p}{p+1-m}},
	\end{equation}
    and obtain
    \begin{align}
         \int_\Omega u^{\alpha+p} \eta^s dx 
		\leq C R^{n-m\frac{\alpha+p}{p+1-m}}.
    \end{align}
	So it is sufficient to show
	\begin{equation}\label{res}
		-m\frac{\alpha+p}{p+1-m} +n <0.
	\end{equation}
	In the case when $p>m-1$, (\ref{res}) is equivalent to $p<\frac{n(m-1)+m\alpha}{n-m}$. Taking the limit as $\alpha \to 0_-$, this yields 
	$p<\frac{n(m-1)}{n-m}$. In the case when $p< m-1$, we can let $\alpha \to - \infty$, which ensures that the inequality \eqref{res} 
	always holds. Consequently, we establish the Liouville theorem for $0\le p< \frac{n(m-1)}{n-m}$.
\end{proof}

\begin{lemma}
	Let $u\geq 0$ satisfy $\Delta_m u \leq 0$ in $\Omega$   and   $|\nabla u|^{m-2}\frac{\partial u}{\partial x_n}= -u^q$ on $\partial \Omega$. If $m-1\leq q < \frac{(n-1)(m-1)}{n-m}$, then $u=0$.
\end{lemma}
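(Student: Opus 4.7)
The plan is to mirror the strategy of Lemma~\ref{lem2.1}, with the boundary integral playing the role previously played by the volume integral $\int u^{\alpha+p}$. By Serrin's strong maximum principle \cite{S1970} we may assume $u>0$ in $\Omega$. Fix a radial cutoff $\eta$ with $\eta\equiv 1$ on $B_R$, $\operatorname{supp}\eta\subset B_{2R}$, $|\nabla\eta|\le C/R$, and take $\alpha<0$ and $s\ge m$ to be chosen. Multiplying $-\Delta_m u\ge 0$ by $u^\alpha\eta^s$, integrating by parts, and using the conormal boundary condition $|\nabla u|^{m-2}u_n = -u^q$ gives
\[
\int_{\partial\Omega} u^{q+\alpha}\eta^s\,d\sigma - \alpha \int_\Omega |\nabla u|^m u^{\alpha-1}\eta^s\,dx \le s\int_\Omega |\nabla u|^{m-1}|\nabla\eta|\,u^\alpha\eta^{s-1}\,dx.
\]
Applying Young's inequality to the right-hand side, which absorbs the cross term using the coefficient $|\alpha|>0$, yields the master inequality
\[
\int_{\partial\Omega} u^{q+\alpha}\eta^s\,d\sigma + \tfrac{|\alpha|}{2}\int_\Omega |\nabla u|^m u^{\alpha-1}\eta^s\,dx \le C\int_\Omega u^{\alpha+m-1}\eta^{s-m}|\nabla\eta|^m\,dx.
\]

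For the endpoint $q=m-1$, taking $\alpha=1-m$ makes both $q+\alpha$ and $\alpha+m-1$ vanish, reducing the master inequality to $\int_{\partial\Omega}\eta^s\,d\sigma \le CR^{n-m}$. Since the left-hand side is of order $R^{n-1}$ and $m>1$, this is a contradiction for $R$ large.

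For $m-1<q<\tfrac{(n-1)(m-1)}{n-m}$, choose
\[
\alpha := \frac{(n-m)q - (n-1)(m-1)}{m-1} < 0,
\]
which satisfies the crucial identity $(\alpha+m-1)\,m^\#/m = q+\alpha$, where $m^\# := \tfrac{(n-1)m}{n-m}$ is the Sobolev-trace exponent. With this choice, Hölder's inequality applied to the right-hand side of the master inequality with conjugate exponents $p_1=\tfrac{n-1}{n-m}$ and $p_2=\tfrac{n-1}{m-1}$, using $(\alpha+m-1)p_1=q+\alpha$, produces
\[
\int_\Omega u^{\alpha+m-1}\eta^{s-m}|\nabla\eta|^m\,dx \le C R^{-\frac{n-m}{n-1}}\Big(\int_{\Omega\cap B_{2R}} u^{q+\alpha}\,dx\Big)^{\frac{n-m}{n-1}}.
\]
The negative power $R^{-(n-m)/(n-1)}$ reflects the strict subcriticality $q<\tfrac{(n-1)(m-1)}{n-m}$, which forces $p_1<\tfrac{n}{n-m}$. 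The volume integral on the right is controlled via the Sobolev embedding $W^{1,m}(\Omega)\hookrightarrow L^{m^*}(\Omega)$ applied to $w = u^{(\alpha+m-1)/m}\eta^{s/m}$, combined with the master inequality to absorb the gradient term arising in $|\nabla w|^m$. Threading these estimates together closes the system and forces $\int_{\partial\Omega\cap B_R}u^{q+\alpha}\,d\sigma\to 0$ as $R\to\infty$; hence $u\equiv 0$ on $\partial\Omega$, and the conclusion $u\equiv 0$ in $\Omega$ follows from $\Delta_m u\le 0$, $u\ge 0$ and Serrin's strong maximum principle.

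The main obstacle is the closure in the last step: Hölder, Sobolev embedding, and Sobolev trace are each scale-invariant, so one must carefully align the powers of $\eta$ so that the $R^{-(n-m)/(n-1)}$ decay factor survives. The delicate point is that the subcriticality of $q$ has to be leveraged at exactly the right place (the strict inequality $p_1<\tfrac{n}{n-m}$) and then propagated through the closure without being undone by the scale-invariant components; this is where the bookkeeping, rather than any deep new idea, is the real hurdle.
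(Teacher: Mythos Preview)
Your master inequality and the endpoint case $q=m-1$ are correct and match the paper. The gap is in the closure for $m-1<q<\tfrac{(n-1)(m-1)}{n-m}$: the loop you describe cannot produce decay, because every ingredient is scale-invariant and the powers of $R$ cancel exactly. Concretely, with $I:=\int_\Omega u^{\alpha+m-1}\eta^{s-m}|\nabla\eta|^m$, your H\"older step gives $I\le CR^{-\frac{n-m}{n-1}}\bigl(\int_{B_{2R}}u^{q+\alpha}\bigr)^{\frac{n-m}{n-1}}$. To feed this back you need $\int_\Omega u^{q+\alpha}$, but Sobolev on $w=u^{(\alpha+m-1)/m}\eta^{s/m}$ controls $\int_\Omega u^{(\alpha+m-1)\frac{n}{n-m}}$, a strictly higher power; passing from one to the other by H\"older costs a factor $R$, and combining with $\int_\Omega w^{m^*}\le CI^{n/(n-m)}$ and the master inequality gives $\int_{B_{2R}}u^{q+\alpha}\le CR\,I^{\frac{n-1}{n-m}}$. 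Substituting back yields $I\le CI$, with the $R^{\pm(n-m)/(n-1)}$ factors cancelling. A doubling iteration only gives $I(R)\le CI(2R)$, which is equally useless. The strict inequality $p_1<\tfrac{n}{n-m}$ you invoke is a dimensional fact, not a consequence of subcriticality of $q$; the subcriticality enters only through $\alpha<0$, which lets you absorb the gradient term but does not by itself break scale-invariance.

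What the paper actually uses to break the scaling is an a priori \emph{lower} bound: since $u>0$ satisfies $\Delta_m u\le 0$ in $\Omega$ with $|\nabla u|^{m-2}\partial_n u\le 0$ on $\partial\Omega$, comparison with the $m$-harmonic fundamental solution gives $u(x)\ge C|x|^{-\frac{n-m}{m-1}}$ for $|x|$ large. Taking $\alpha=1-m$ (so $\alpha+m-1=0$), the right-hand side of the master inequality is $\le CR^{n-m}$, while the boundary term is bounded below by $\int_{\partial\Omega\cap B_R}u^{q-m+1}\,d\sigma\ge cR^{\,n-1-\frac{n-m}{m-1}(q-m+1)}$. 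The exponent on the left exceeds $n-m$ precisely when $q<\tfrac{(n-1)(m-1)}{n-m}$, yielding the contradiction. This lower bound is the missing idea; without it (or an equivalent non-scale-invariant input) the purely integral closure you propose cannot succeed.
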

\begin{proof} As in lemma \ref{lem2.1}, we assume $u > 0$ in $\Omega$.
	Let $\eta$ be a cut-off function such that $\eta = 1$ in $B_R(0)$, $\eta = 0$ in $B^c_{2R}(0)$ and $0\leq\eta \leq 1$, $|\nabla \eta|\leq \frac{C}{R}$. Multiplying \eqref{pde1} by $u^\alpha \eta^s$  and integrating by parts, where $\alpha,s \in \mathbb{R}$ are to be determined, we get
	\begin{align}
		0&\geq \int_\Omega \Delta_mu u^\alpha \eta^s \notag \\
		&=\int_\Omega -\alpha |\nabla u|^m u^{\alpha-1}\eta^s -\int_\Omega s|\nabla u|^{m-2}
		u^\alpha \eta^{s-1} u_i \eta_i -\int_{\partial \Omega} |\nabla u|^{m-2}\frac{\partial u}{\partial x_n}u^\alpha \eta^s.
	\end{align}
    Hence we get
    \begin{equation} \label{eq8}
    	\int_{\partial \Omega} u^{\alpha+q } \eta^s -\alpha \int_\Omega|\nabla u|^mu^{\alpha-1}\eta^s \leq s\int_\Omega |\nabla u|^{m-1} u^\alpha \eta^{s-1}|\nabla \eta |.
    \end{equation}
   We can use the following Young's inequality
   \begin{equation}
   	s\int_\Omega |\nabla u|^{m-1} u^\alpha \eta^{s-1}|\nabla \eta | \leq -\frac{\alpha}{2}
   	\int_\Omega|\nabla u|^mu^{\alpha-1}\eta^s + C \int_\Omega u^{\alpha-1+m} \eta^{s-m}|\nabla \eta |^m.
   \end{equation}
   Substituting into \eqref{eq8}, we deduce
   \begin{equation}
   	\int_{\partial \Omega} u^{\alpha+q } \eta^s \leq C\int_\Omega u^{\alpha-1+m} \eta^{s-m}|\nabla \eta |^m.
   \end{equation}
Note that using Serrin's strong maximum principle \cite{S1970}, if $u\geq 0$ satisfy $\Delta_m u \leq 0$ in $\Omega$   and   $|\nabla u|^{m-2}\frac{\partial u}{\partial x_n}\le 0$ on $\partial \Omega$, we have
   \begin{equation}
       u(x) \geq C |x|^{-\frac{n-m}{m-1}} \quad\text{for}\quad|x|\,\,\,\text{large}.\label{e2.12}
   \end{equation}
Then choosing $\alpha =1-m, s=m$, and using (\ref{e2.12}), we obtain 
   \begin{equation}
   	R^{n-1}R^{-\frac{n-m}{m-1}(1-m+q)}\leq C R^{n-m}.
   \end{equation}
   Letting $R\to \infty$, we get $q\geq \frac{(m-1)(n-1)}{n-m}$. Hence if $m-1\le q< \frac{(m-1)(n-1)}{n-m}$, then we have
   $u=0$ in $\Omega$.
\end{proof}

\section{Proof of the theorem}

Denote $p^*= \frac{nm}{n-m}-1, q^*=\frac{n(m-1)}{n-m}$, which is called Sobolev index. Denote $p_*=\frac{n(m-1)}{n-m}$, $q_* =\frac{(n-1)(m-1)}{n-m}$, which is called Serrin index. The work we are going to do next is to deal with the region $p_* \leq p < p^*, q_* \leq q < q^*$. Now we will establish two differential identities adapted from \cite{Zhou24}.

\begin{proof}[Proof of Theorem \ref{Them1}]
    Set $X^i=|\nabla u|^{m-2}u_i$, $X^i_j=(|\nabla u|^{m-2}u_i)_j$, $E^i_j= X^i_j -\frac{\Delta_m u}{n}\delta_{ij} $, $L_{ij}=\frac{|\nabla u|^{m-2}u_iu_j}{u} -\frac{1}{n}
\frac{|\nabla u|^m}{u}\delta_{ij}$. Since $E^i_j$ and $L_{ij}$ are both trace-free, it is easy to check
\begin{align*}
	E^i_jE^j_i&=X^i_j X^j_i-\frac{(\Delta_m u)^2}{n} ,\\
    E^i_jL_{ij}&=\frac{|\nabla u|^{m-2}X^i_ju_iu_j}{u}-\frac{\Delta_m u |\nabla u|^m}{nu} ,\\
    L_{ij}^2&=\frac{|\nabla u|^{2m}}{u^2}-\frac{1}{n}\frac{|\nabla u|^{2m}}{u^2}=\frac{n-1}{n}\frac{|\nabla u|^{2m}}{u^2} .
\end{align*}

	Set $\alpha=1-q $, we consider the following Serrin-Zou type differen-
tial equality
\begin{align}
	&\partial_i(u^\alpha X^i_j X^j - u^\alpha \Delta_m u X^i +\mu u^{\alpha-1}|\nabla u|^mX^i -\lambda u^{\alpha+p}X^i)\notag\\
	=&u^\alpha(X^i_jX^j_i-(\Delta_m u)^2)+ \alpha u^{\alpha-1}X^i_jX^ju_i-\alpha u^{\alpha-1}\Delta_m u|\nabla u|^m + \mu u^{\alpha-1}|\nabla u|^m \Delta_m u \notag\\
	&+m\mu u^{\alpha-1}|\nabla u|^{m-2} u_ku_{ki}X^i +\mu (\alpha-1)u^{\alpha-2}|\nabla u|^{2m} \notag \\
	&-\lambda(\alpha+p)u^{\alpha+p-1}|\nabla u|^{m} -\lambda u^{\alpha+p}\Delta_m u\notag\\
	=&u^\alpha(E^i_jE^j_i+\frac{(\Delta_m u)^2}{n})- u^\alpha (\Delta_m u)^2 +(\frac{m}{m-1}\mu +\alpha)
	u^\alpha(E^i_jL_{ij}+\frac{\Delta_m u |\nabla u|^m}{nu})\notag\\
	&+(\mu-\alpha) u^{\alpha-1}\Delta_m u|\nabla u|^m
	+\mu (\alpha-1)u^{\alpha-2}|\nabla u|^{2m}
	-\lambda(\alpha+p)u^{\alpha+p-1}|\nabla u|^m\notag\\
	& -\lambda u^{\alpha+p}\Delta_m u\notag\\
	=&u^\alpha|E^i_j+\frac{1}{2}(\frac{m}{m-1}\mu+\alpha)L_{ij}|^2 +(\lambda-\frac{n-1}{n})u^\alpha(\Delta_m u)^2\notag \\ &+\bigg((\mu-\alpha)+\frac{1}{n}(\frac{m}{m-1}\mu+\alpha)+\lambda(\alpha+p)\bigg)
	u^{\alpha-1}\Delta_m u |\nabla u |^m\notag\\
	&+\bigg(\mu(\alpha-1)-\frac{n-1}{n}\frac{1}{4}(\frac{m}{m-1}\mu+\alpha)^2\bigg)u^{\alpha-2}|\nabla u|^{2m}. \label{eq2}
\end{align}
Multiplying both sides by $\eta$ and integrating by parts, where $\eta$ is a cut-off function to be chosen later, we obtain
\begin{align}\label{eq3}
	&\int_\Omega\partial_i(u^\alpha X^i_jX^j - u^\alpha \Delta_m u X^i +\mu u^{\alpha-1}|\nabla u|^m X^i -\lambda u^{\alpha+p}X^i)\eta dx\notag \\
	=&\int_\Omega -u^\alpha X^i_jX^j\eta _i + u^\alpha \Delta_m u X^i\eta _i -\mu u^{\alpha-1}|\nabla u|^mX^i\eta _i +\lambda u^{\alpha+p}X^i\eta _i dx\notag\\
	&+\int_{\partial \Omega}-u^\alpha X^n_jX^j\eta + u^\alpha \Delta_m u X^n \eta -\mu u^{\alpha-1}|\nabla u|^mX^n\eta +\lambda u^{\alpha+p}X^n\eta.
\end{align}
Using the boundary condition $X^n=-u^q$ on $\partial\Omega$, we rewrite the last term in $(\ref{eq3})$ as follows.
\begin{align}
	&\int_{\partial \Omega}-u^\alpha X^n_jX^j\eta + u^\alpha \Delta_m u X^n \eta -\mu u^{\alpha-1}|\nabla u|^mX^n\eta +\lambda u^{\alpha+p}X^n\eta\notag\\
	=& \int_{\partial \Omega} \sum_{a=1}^{n-1}-u^\alpha (|\nabla u|^{m-2}u_n)_a|\nabla u|^{m-2}u_a \eta -\int_{\partial \Omega} u^\alpha (|\nabla u|^{m-2}u_n)_n|\nabla u|^{m-2}u_n \eta\notag \\
	&+u^\alpha \bigg[\sum_{a=1}^{n-1}
	(|\nabla u|^{m-2}u_a)_a+(|\nabla u|^{m-2}u_n)_n\bigg]|\nabla u|^{m-2}u_n \eta\notag\\
	&-\mu u^{\alpha-1}|\nabla u|^m|\nabla u|^{m-2}u_n\eta +\lambda u^{\alpha+p}|\nabla u|^{m-2}u_n\eta\notag\\
	=&\int_{\partial \Omega} u^\alpha (u^q)_a|\nabla u|^{m-2}u_a \eta -u^\alpha (|\nabla u|^{m-2}u_a)_a u^q \eta +\mu u^{\alpha-1}|\nabla u|^m u^q \eta
	-\lambda u^{\alpha+p}u^q \eta\notag\\
	=&\int_{\partial \Omega} q|\nabla u|^{m-2}(u_a)^2\eta +|\nabla u|^{m-2} (u_a)^2\eta +u|\nabla u|^{m-2}u_a \eta _a + \mu |\nabla u|^m \eta - \lambda
	u^{p+1}\eta \notag\\
	=&\int_{\partial \Omega} (q+1)|\nabla u|^{m-2}(u_a)^2\eta  +u|\nabla u|^{m-2}u_a \eta _a + \mu |\nabla u|^m \eta - \lambda
	u^{p+1}\eta.\label{e3.3}
\end{align}
To handle these boundary terms, we need the following Pohozaev-type identity.
\begin{align*}
	&\partial_n(|\nabla u|^m)-m\partial_i(|\nabla u|^{m-2}u_iu_n)-\frac{m}{p +1}\partial_n(u^{p+1})\\
	=&m|\nabla u|^{m-2}u_iu_{in}-m(|\nabla u|^{m-2}u_i)_iu_n-m|\nabla u|^{m-2}u_iu_{ni}-mu^p u_n\\
	=&0.
\end{align*}
Multiplying both sides by $\eta$ and integrating by parts, we get
\begin{align}\label{eq1}
	0&=\int_\Omega \bigg(\partial_n(|\nabla u|^m)-m\partial_i(|\nabla u|^{m-2}u_iu_n)-\frac{m}{p +1}\partial_n(u^{p+1})\bigg) \eta dx\notag \\
	&=\int_\Omega -|\nabla u|^m\eta _n + m|\nabla u|^{m-2}u_i  u_n\eta _i +\frac{m}{p+1}u^{p+1}\eta _n\notag \\
	&+\int_{\partial \Omega} -|\nabla u|^m \eta + m|\nabla u|^{m-2}u_n^2 \eta +\frac{m}{p+1}u^{p+1}\eta d\sigma.
\end{align}
\textbf{Case 1:} $\frac{n-1}{n}<\frac{q+1}{p+1}$.\\
Taking $\mu =-\frac{m-1}{m}(q+1)$ and using \eqref{eq1}, we have
\begin{align*}
	&\int_{\partial \Omega} -(q+1)|\nabla u|^{m-2}(u_a)^2\eta   - \mu |\nabla u|^m \eta\\
	=&-(\mu+q+1)\int_{\partial \Omega} |\nabla u|^m\eta +(q+1)\int_{\partial \Omega} |\nabla u|^{m-2}u_n^2\eta\\
	=&\frac{q+1}{m}\bigg[-\int_{\partial \Omega} |\nabla u|^m\eta +m\int_{\partial \Omega} |\nabla u|^{m-2}u_n^2\eta  \bigg]\\
	=&\frac{q+1}{m}\bigg[\int_\Omega |\nabla u|^m\eta _n - m|\nabla u|^{m-2}u_i  u_n\eta _i -\frac{m}{p+1}u^{p+1}\eta _n-\int_{\partial \Omega}\frac{m}{p+1}u^{p+1}\eta d\sigma\bigg].
\end{align*}
Substituting into \eqref{eq3}, we get
\begin{align}\label{eq4}
	&\int_\Omega \bigg\{u^\alpha|E^i_j+\frac{1}{2}(\frac{m}{m-1}\mu+\alpha)L_{ij}|^2 \eta +(\lambda-\frac{n-1}{n})u^\alpha(\Delta_m u)^2\eta\notag \\ &+\bigg((\mu-\alpha)+\frac{1}{n}(\frac{m}{m-1}\mu+\alpha)+\lambda(\alpha+p)\bigg)
	u^{\alpha-1}\Delta_m u |\nabla u |^m \eta \notag\\
	&+\bigg(\mu(\alpha-1)-\frac{n-1}{4n}(\frac{m}{m-1}\mu+\alpha)^2\bigg)u^{\alpha-2}|\nabla u|^{2m} \eta\bigg\}+\int_{\partial \Omega}(\lambda- \frac{q+1}{p+1}) u^{p+1}\eta \notag\\
	=&\int_\Omega -u^\alpha X^i_jX^j\eta _i + u^\alpha \Delta_m u X^i\eta _i -\mu u^{\alpha-1}|\nabla u|^mX^i\eta _i +\lambda u^{\alpha+p}X^i\eta _i dx\notag\\
	&-\bigg[\frac{q+1}{m}\int_\Omega |\nabla u|^m \eta _n -(q+1)\int_\Omega |\nabla u|^{m-2}
	u_iu_n\eta _i -\frac{q+1}{p+1}\int_\Omega u^{p+1} \eta _n \bigg]\notag\\
    &+\int_{\partial \Omega}u\sum_{a=1}^{n-1}|\nabla u|^{m-2}u_a\eta_a
\end{align}
In this identity, we hope the left side is positive and the right side converges to zero when $R\rightarrow\infty$.

Therefore, we need 
\begin{align}
	&\lambda >\frac{q+1}{p+1},\\
	&(\mu-\alpha)+\frac{1}{n}(\frac{m}{m-1}\mu+\alpha)+\lambda(\alpha+p)<0 ,\label{eq9}\\
	&\mu(\alpha-1)-\frac{n-1}{4n}(\frac{m}{m-1}\mu+\alpha)^2 >0. \label{eq10}
\end{align}
If we take $\lambda = \frac{q+1}{p+1}$, then \eqref{eq9} becomes 
\begin{equation}\label{eq11}
	q^2-(\frac{1}{m}-\frac{2}{n})q-(1+\frac{1}{m}-\frac{2}{n})pq+\frac{m-1}{m}(p+1) >0.
\end{equation}
\begin{claim}
	If $\lambda = \frac{q+1}{p+1}+\delta$ and $\delta>0$ small enough depending only on $n,m,p,q$, then \eqref{eq10} and \eqref{eq11} are satisfied under the assumptions:
	 $p_* \leq p \leq p^*, q_* \leq q \leq q^*$ and $\frac{n-1}{n}<\frac{q+1}{p+1}$.
\end{claim}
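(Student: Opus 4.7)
The plan is to verify \eqref{eq10} and \eqref{eq11} directly under the stated hypotheses, and then to deduce the perturbed version by continuity in $\lambda$.

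For \eqref{eq10}, I would substitute $\mu = -\tfrac{m-1}{m}(q+1)$ and $\alpha = 1-q$. One immediately finds $\tfrac{m}{m-1}\mu + \alpha = -2q$, so \eqref{eq10} collapses to
\[
\tfrac{m-1}{m}\,q(q+1) - \tfrac{n-1}{n}\,q^{2} > 0.
\]
Dividing by $q > 0$ and using $\tfrac{m-1}{m} - \tfrac{n-1}{n} = -\tfrac{n-m}{mn} < 0$, this is equivalent to $q < q^{\ast} = \tfrac{n(m-1)}{n-m}$, which is given.

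For \eqref{eq11}, the key step is to introduce the ratio $r := (p+1)/(q+1)$. Since $p \ge p_{\ast} = q^{\ast} \ge q$ and Case 1 reads $r < \tfrac{n}{n-1}$, one has $r \in [1, \tfrac{n}{n-1})$. Substituting $p = r(q+1)-1$ into \eqref{eq11} and factoring out $(q+1)$, a short manipulation rewrites \eqref{eq11} as
\[
q + r\Bigl[\tfrac{m-1}{m} - q\bigl(1 - \tfrac{2}{n} + \tfrac{1}{m}\bigr)\Bigr] > 0.
\]
For every $q \ge q_{\ast}$ the bracket is strictly negative; this reduces to the elementary inequality $mn(n-2) + 2m - n > 0$, which holds for all $1 < m < n$. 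Hence the left-hand side is strictly decreasing in $r$, and its infimum over Case 1 is attained as $r \to (\tfrac{n}{n-1})^{-}$. A direct computation identifies this limit with $\tfrac{n(m-1) - q(n-m)}{m(n-1)}$, which is strictly positive by $q < q^{\ast}$, giving strict positivity throughout Case 1.

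Finally, \eqref{eq11} is exactly \eqref{eq9} at $\lambda = (q+1)/(p+1)$, multiplied by the negative factor $-(p+1)$, and \eqref{eq9} is linear in $\lambda$ with slope $\alpha + p = 1 + p - q > 0$. The strict slack produced in the previous paragraph therefore translates into a constant $\delta_{0} > 0$, depending only on $n, m, p, q$, such that \eqref{eq9} continues to hold for every $\lambda \in [(q+1)/(p+1), (q+1)/(p+1) + \delta_{0})$; any $\delta \in (0, \delta_{0})$ then proves the claim. The main obstacle I anticipate is the algebraic identity that rewrites \eqref{eq11} cleanly in the variable $r$: after that identity is in hand, everything reduces to a one-variable monotonicity check and two elementary polynomial inequalities.
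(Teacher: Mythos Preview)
Your proposal is correct. Your treatment of \eqref{eq10} and of the continuity step in $\lambda$ matches the paper's exactly.

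For \eqref{eq11}, however, you take a genuinely different route from the paper. The paper argues geometrically: it observes that the zero set of \eqref{eq11} is a hyperbola through the Sobolev endpoint $(p^{*},q^{*})$, computes the tangent slope there to be $\tfrac{(n-1)(m-1)}{mn-2n+m}$, checks that this exceeds the slope $\tfrac{n-1}{n}$ of the Case~1 boundary line $\tfrac{q+1}{p+1}=\tfrac{n-1}{n}$ (which also passes through $(p^{*},q^{*})$), and concludes from this tangent comparison that \eqref{eq11} holds throughout the Case~1 region. Your approach is algebraic: after the substitution $p+1=r(q+1)$ you factor \eqref{eq11} as $(q+1)\bigl[q+r\bigl(\tfrac{m-1}{m}-q(1+\tfrac{1}{m}-\tfrac{2}{n})\bigr)\bigr]>0$, show the bracket is strictly negative for $q\ge q_{*}$, and then use monotonicity in $r$ together with an explicit evaluation at the boundary $r=\tfrac{n}{n-1}$ to obtain the sharp condition $q<q^{*}$. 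Your argument has the advantage of being fully global and self-contained: the paper's tangent comparison is literally a local statement at $(p^{*},q^{*})$ and tacitly relies on additional structure of the hyperbola to pass to the whole parameter range, whereas your monotonicity check handles every admissible $(p,q)$ directly. (Incidentally, the lower bound $r\ge 1$ that you record is never used; only $r<\tfrac{n}{n-1}$ matters.)
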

\begin{proof} By continuity, we only consider $\delta=0$. \\
	Firstly, we check \eqref{eq10}. Since $\mu= -\frac{m-1}{m}(q+1)$, (\ref{eq11}) becomes
	\begin{align*}
		&\frac{m-1}{m}(q+1)q-\frac{n-1}{4n}(-q-1+1-q)^2 >0\\
		\Longleftrightarrow & \frac{m-n}{nm}q^2+\frac{m-1}{m}q >0\\
		\Longleftrightarrow &  0<q<\frac{n(m-1)}{n-m}.
	\end{align*}
   Hence \eqref{eq10} is valid.\\ 
   Next, we check \eqref{eq11}.
   Note that \eqref{eq11} represents a hyperbola passing through the point $(p^*,q^*)$.  And its tangent line at this point is $(q-q^*)= \frac{(n-1)(m-1)}{mn-2n+m}(p-p^*)$. Comparing it with the line
$\frac{n-1}{n}=\frac{q+1}{p+1}$ which also passes the point $(p^*,q^*)$, we can see
   \begin{equation}
   	\frac{(n-1)(m-1)}{mn-2n+m}>\frac{n-1}{n}\Longleftrightarrow m<n .
   \end{equation}
   So \eqref{eq11} is valid under the assumption $\frac{n-1}{n}<\frac{q+1}{p+1}$.
\end{proof}

\noindent\textbf{Case 2:}$\frac{n-1}{n}\geq \frac{q+1}{p+1}$\\
Note that the boundary term $\int_{\partial \Omega} u^{p+1}\eta d\sigma$ can also be written as follows:
\begin{align} \label{20}
	\int_{\partial \Omega} u^{p+1}\eta d\sigma
	&=\int_\Omega \frac{p+1}{m}\bigg(|\nabla u|^m \eta _n - m|\nabla u|^{m-2}u_i u_n\eta _i-\frac{m}{p+1}u^{p+1}\eta _n \bigg) \notag\\
	&+\int_{\partial \Omega}\frac{p+1}{m} \bigg( |\nabla u|^m\eta -m|\nabla u|^{m-2}u_n^2 \eta \bigg).
\end{align}
Therefore, the right side in (\ref{e3.3}) becomes
\begin{align}
	&-\lambda\int_\Omega \frac{p+1}{m}\bigg(|\nabla u|^m \eta _n - m|\nabla u|^{m-2}u_i u_n\eta _i-\frac{m}{p+1}u^{p+1}\eta _n \bigg) \notag\\
    &-\bigg[\frac{p+1}{m}\lambda \int_{\partial \Omega} |\nabla u|^m\eta d\sigma
	-(p+1)\lambda\int_{\partial \Omega} |\nabla u|^{m-2}u_n^2\eta d\sigma-\mu\int_{\partial \Omega} |\nabla u|^m\eta d\sigma\notag \\
	&-(q+1)\int_{\partial \Omega} \sum_{a=1}^{n-1}|\nabla u|^{m-2}u_a^2\eta d\sigma 
	 \bigg]_A +\int_{\partial \Omega} u|\nabla u|^{m-2}u_a \eta _a\label{eq7}
\end{align}
In this case, let $\lambda >\frac{n-1}{n}\geq  \frac{q+1}{p+1}$, we have $(p+1)\lambda > q+1$. Thus, the boundary term A in \eqref{eq7} is 
\begin{equation}
	-\bigg[ \bigg(\frac{p+1}{m}\lambda -\mu - (p+1)\lambda\bigg)\int_{\partial \Omega}
	|\nabla u|^m \eta +\bigg((p+1)\lambda -(q+1) \bigg)\int_{\partial \Omega}
	|\nabla u|^{m-2}u_a^2\eta d\sigma \bigg].
\end{equation}
Hence we need
\begin{equation} \label{line1}
	\mu +\frac{m-1}{m}(p+1)\lambda \leq 0.
\end{equation}
To make the left side of \eqref{eq4} positive, we need
\begin{align}
	&\lambda >\frac{n-1}{n},\\
	&(\mu-\alpha)+\frac{1}{n}(\frac{m}{m-1}\mu+\alpha)+\lambda(\alpha+p)<0,\label{line2}\\
	&\mu(\alpha-1)-\frac{n-1}{4n}(\frac{m}{m-1}\mu+\alpha)^2 >0. \label{line3}
\end{align}
\begin{claim}
	If we choose $\lambda= \frac{n-1}{n}+\delta, \mu=-\frac{m-1}{m}(p+1)\frac{n-1}{n}- \delta$, where
	$\delta$ are small enough depending only on $n,m,p,q$, then \eqref{line1} and \eqref{line2} are satisfied. Under the addition assumption
	\begin{equation} \label{eq12}
		\frac{(n-1)^2}{n^2}(p+1)^2 -2\bigg(\frac{n-1}{n}-(\frac{n-1}{n}-2\frac{m-1}{m})q \bigg)(p+1)+(1-q)^2 <0,
	\end{equation}
\eqref{line3} is satisfied.
\end{claim}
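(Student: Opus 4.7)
The plan is to verify the three inequalities \eqref{line1}, \eqref{line2}, \eqref{line3} by direct substitution of $\alpha = 1-q$, $\lambda = \frac{n-1}{n} + \delta$, $\mu = -\frac{m-1}{m}(p+1)\frac{n-1}{n} - \delta$, first analyzing the borderline case $\delta = 0$ and then using continuity in the small perturbation $\delta$. For \eqref{line1}, the $\delta = 0$ value of $\mu + \frac{m-1}{m}(p+1)\lambda$ is identically $0$, so the inequality is saturated; the correction is linear in $\delta$, and the perturbation can be arranged so that the combination is $\leq 0$. The role of this specific $\mu$ is precisely to saturate \eqref{line1}, which is the binding constraint coming from the Case~2 boundary analysis in \eqref{eq7}.

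For \eqref{line2}, substituting the chosen values and using the Case~2 hypothesis $\frac{n-1}{n} \geq \frac{q+1}{p+1}$ together with the range $p \geq p_*$, $q \geq q_*$, the $\delta = 0$ left-hand side is a concrete linear expression in $p$ and $q$ that I would check to be strictly negative by a direct computation. Continuity in $\delta$ then extends the strict inequality to all sufficiently small $\delta > 0$.

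The main step is \eqref{line3}. With $\alpha - 1 = -q$ and $\frac{m}{m-1}\mu + \alpha = -(p+1)\frac{n-1}{n} + 1 - q$ at $\delta = 0$, the left-hand side of \eqref{line3} becomes
\begin{equation*}
\frac{m-1}{m}(p+1)\frac{n-1}{n}\,q \;-\; \frac{n-1}{4n}\Bigl((p+1)\frac{n-1}{n} + q - 1\Bigr)^2.
\end{equation*}
Multiplying through by $\frac{4n}{n-1}$, expanding the square, and combining the cross term $2(p+1)\frac{n-1}{n}(q-1)$ with the contribution $-4\frac{m-1}{m}(p+1)q$, one factors a single $(p+1)$ from the middle terms to reach exactly
\begin{equation*}
\frac{(n-1)^2}{n^2}(p+1)^2 - 2\Bigl(\frac{n-1}{n} - \bigl(\frac{n-1}{n} - 2\frac{m-1}{m}\bigr)q\Bigr)(p+1) + (1-q)^2 < 0,
\end{equation*}
which is precisely hypothesis \eqref{eq12}. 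Continuity in $\delta$ then preserves the strict inequality for all $\delta$ small enough in terms of $n, m, p, q$.

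The main obstacle I expect is this algebraic identification in \eqref{line3}: because the $\delta = 0$ version of \eqref{line3} must coincide \emph{exactly} with \eqref{eq12} (not merely imply a weaker form), one must carefully track the signs of $(q-1)$ versus $-q$, the rescaling by $\frac{4n}{n-1}$, and the regrouping of the $p+1$ factor in the middle coefficient $\frac{n-1}{n} - (\frac{n-1}{n} - 2\frac{m-1}{m})q$. Once this identification is confirmed, the rest of the argument is a routine continuity perturbation that absorbs all $O(\delta)$ errors from $\lambda$ and $\mu$ into the strictness of the $\delta = 0$ inequalities.
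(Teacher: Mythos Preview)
Your proposal is correct and takes essentially the same approach as the paper: check each of \eqref{line1}--\eqref{line3} at $\delta=0$ by direct substitution of $\alpha=1-q$, $\lambda=\frac{n-1}{n}$, $\mu=-\frac{m-1}{m}(p+1)\frac{n-1}{n}$, and then invoke continuity for small $\delta$. One small correction on \eqref{line2}: the paper's computation shows that at $\delta=0$ the $q$-dependence cancels and the inequality reduces exactly to $p<\frac{nm-n+m}{n-m}=p^*$, so it is the strict upper bound on $p$ from the theorem (not the Case~2 hypothesis $\frac{n-1}{n}\ge\frac{q+1}{p+1}$ or the lower bounds $p\ge p_*$, $q\ge q_*$) that makes it hold.
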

\begin{proof}
	 \eqref{line1} is obvious. By continuity, we only consider $\delta=0$.
     
     We now check \eqref{line2}. Since $ \lambda= \frac{n-1}{n}$ and $\mu=-\frac{m-1}{m}(p+1)\frac{n-1}{n}$, \eqref{line2} becomes
	 \begin{align*}
	 	&-(1+\frac{m}{n(m-1)})\frac{m-1}{m}(p+1)\frac{n-1}{n} +\frac{n-1}{n}p <0 \\
	 	\Longleftrightarrow 
	 	&(1-\frac{nm-n+m}{nm})p <\frac{nm-n+m}{nm}\\
	 	\Longleftrightarrow
	 	&p<\frac{nm-n+m}{n-m}.
	 \end{align*}
 Next we check \eqref{line3}. We take $ \lambda= \frac{n-1}{n}$ and $\mu=-\frac{m-1}{m}(p+1)\frac{n-1}{n}$ and  \eqref{line3} follows that
 \begin{align*}
 	&\frac{m-1}{m}(p+1)\frac{n-1}{n}q-\frac{n-1}{4n} \bigg(-(p+1)\frac{n-1}{n}+1-q \bigg)^2 >0\\
 	\Longleftrightarrow
 	&\frac{(n-1)^2}{n^2}(p+1)^2 -2\bigg(\frac{n-1}{n}-(\frac{n-1}{n}-2\frac{m-1}{m})q \bigg)(p+1)+(1-q)^2 <0.
 \end{align*}\\
\end{proof}

Combining the two cases, we choose $\lambda= \frac{n-1}{n}+\delta, \mu=-\frac{m-1}{m}(p+1)\frac{n-1}{n}- \delta$, where
	$\delta$ are small enough depending only on $n,m,p,q$. Then from \eqref{eq4},
under the assumption of $p,q$ in Theorem \ref{Them1}, there exists $\epsilon>0$ depending only on $n,m,p,q,\delta$ such that
\begin{align}
	&\epsilon\bigg\{\int_\Omega \bigg( u^\alpha E^i_j E^j_i+ u^{\alpha-2}|\nabla u|^{2m}+ u^{\alpha+2p}\bigg)\eta + \int_{\partial \Omega}\big( u^{p+1}+|\nabla u|^m\big)\eta\bigg\} \notag\\
	\leq&C\int_\Omega \bigg(u^\alpha |X^i_jX^j\eta _i| + u^\alpha |\Delta_m u X^i\eta _i| + u^{\alpha-1}|\nabla u|^m|X^i\eta _i| +u^{\alpha+p}|X^i\eta _i|\bigg) dx\notag\\
	&+C\int_{\partial \Omega}u\sum_{a=1}^{n-1}|\nabla u|^{m-2}|u_a\eta _a| +C\int_\Omega\bigg[ |\nabla u|^m |\eta_n| + |\nabla u|^{m-2}
	|u_iu_n\eta _i| + u^{p+1} |\eta _n |\bigg] \notag \\
	\leq &C\int_\Omega \bigg(u^\alpha |E^i_j X^j\eta_i | + u^{\alpha-1}|\nabla u|^{2m-1}|\nabla\eta| +u^{\alpha+p}|X^i\eta _i|\bigg) dx\notag\\
	&+C\int_{\partial \Omega}u\sum_{a=1}^{n-1}|\nabla u|^{m-2}|u_a\eta _a| +C\int_\Omega\bigg[ |\nabla u|^m |\nabla\eta| + u^{p+1} |\nabla\eta |\bigg],
\end{align}
where $C$ depends only on $n,m,p,q$.\\
 We deal with the right-hand side using Young's inequality. Let $\eta = \phi^s$, and $\phi$ be a cut-off function such that $\phi = 1$ in $B_R(0)$, $\phi = 0$ in $B^c_{2R}(0)$ and $0\leq\phi \leq 1$, $|\nabla \phi|\leq \frac{C}{R}$.
 \begin{equation}
 	C\int_\Omega u^\alpha |E^i_j X^j \eta_i| \leq \frac{\epsilon}{4}\int_\Omega  u^\alpha E^i_j E^j_i \eta
 	+\frac{\epsilon}{4}\int_\Omega u^{\alpha-2} |\nabla u|^{2m}\eta +C \int_\Omega u^{\alpha-2+2m} \phi^{s-2m}|\nabla \phi|^{2m},\notag
 \end{equation}
\begin{equation}
    C\int_\Omega u^{\alpha+p}|X^i\eta_i| \leq \frac{\epsilon}{4}\int_\Omega  u^{\alpha+2p}\eta 
    +\frac{\epsilon}{4}\int_\Omega u^{\alpha-2} |\nabla u|^{2m}\eta +C \int_\Omega u^{\alpha-2+2m} \phi^{s-2m}|\nabla \phi|^{2m},\notag
\end{equation}
\begin{equation}
	C\int_\Omega u^{\alpha-1}|\nabla u|^{2m-1} |\nabla\eta| \leq \frac{\epsilon}{4}\int_\Omega u^{\alpha-2} |\nabla u|^{2m}\eta +C \int_\Omega u^{\alpha-2+2m} \phi^{s-2m}|\nabla \phi|^{2m},\notag
\end{equation}
\begin{equation}
	C\int_\Omega |\nabla u|^m |\nabla\eta| \leq \frac{\epsilon}{4}\int_\Omega u^{\alpha-2} |\nabla u|^{2m}\eta +C \int_\Omega u^{2-\alpha} \phi^{s-2}|\nabla \phi|^{2},\notag
\end{equation}
\begin{equation}
	C\int_\Omega u^{p+1}|\nabla\eta| \leq \frac{\epsilon}{4}\int_\Omega  u^{\alpha+2p} \eta
	+C \int_\Omega u^{2-\alpha} \eta^{s-2}|\nabla \phi|^{2},\notag
\end{equation}
\begin{equation}
	C\int_{\partial \Omega} u\sum_{a=1}^{n-1}|\nabla u|^{m-2}|u_a\eta_a| 
	\leq \frac{\epsilon}{2}\int_{\partial \Omega} |\nabla u|^m\eta +C\int_{\partial \Omega}
	u^m \phi^{s-m}|\nabla \phi|^m.\notag
\end{equation}
Consequently, we obtain
\begin{align}
	\int_\Omega u^{\alpha+2p}\phi^s +\int_{\partial \Omega} u^{p+1}\phi^s 
	\leq&  C \int_\Omega (u^{\alpha-2+2m} \phi^{s-2m}|\nabla \phi|^{2m} + u^{2-\alpha} \phi^{s-2}|\nabla \phi|^{2})\notag\\
    &+C\int_{\partial \Omega}u^m \phi^{s-m} |\nabla \phi|^m.\label{e3.20}
\end{align}
Here $s>2m$ is determined later.
Next, we will use the Young inequality to deal with the right-hand side of (\ref{e3.20}).\\

For the second term in (\ref{e3.20})
\begin{align*}
	C\int_\Omega u^{2-\alpha} \phi^{s-2}|\nabla \phi |^2 &= C\int_\Omega u^{q+1} \phi^{s-2}   |\nabla \phi |^2\\
	&\leq \frac{1}{4} \int_\Omega u^{2p-q+1} \phi ^s  + C \int_\Omega \phi^{s-\frac{2p-q+1}{p-q}} |\nabla \phi |^{\frac{2p-q+1}{p-q}}\\
	&\leq  \frac{1}{4} \int_\Omega u^{2p-q+1}\phi^s + C R^{n-\frac{2p-q+1}{p-q}}.
\end{align*}
It is sufficient to show that 
\begin{align}
	&n-\frac{2p-q+1}{p-q}<0 \notag\\
	\Longleftrightarrow &  ~q>\frac{n-2}{n-1}p-\frac{1}{n-1}.
\end{align}

For the third term in (\ref{e3.20})
\begin{align*}
	C\int_{\partial \Omega} u^m \phi^{s-m}|\nabla \phi|^m 
	 &\leq \frac{1}{2}
	\int_{\partial \Omega} u^{p+1}\phi^s +C\int_{\partial \Omega} \phi^{s-\frac{m(p+1)}{p+1-m}} |\nabla \phi|^{\frac{m(p+1)}{p+1-m}}\\
	&\leq \frac{1}{2}\int_{\partial \Omega} u^{p+1}\phi^s +C R^{n-1-\frac{m(p+1)}{p+1-m}}.
\end{align*}
It is sufficient to show that 
\begin{align*}
	&n-1-m\frac{p+1}{p+1-m}<0\\
	\Longleftrightarrow & (n-m-1)(p+1)<(n-1)m.
\end{align*}
If $n-m-1 \leq 0$, the above inequality is true since the left-hand side is negative. If $n-m-1 >0$,
then it is also true since  $p<\frac{nm-n+m}{n-m}<\frac{nm-n+1}{n-m-1}$.\\

For the first term in (\ref{e3.20}), we need to take care of the sign of $\alpha-2+2m = 2m-q-1$.
If $2m-q-1 >0$, then we use the Young inequality
\begin{align*}
	C\int_\Omega u^{\alpha-2+2m} \phi^{s-2m}|\nabla \phi |^{2m} &=  C\int_\Omega u^{2m-1-q} \phi^{s-2m}|\nabla \phi |^{2m}\\
	&\leq 
	\frac{1}{4}\int_\Omega u^{2p-q+1}\phi^s  +C\int_\Omega \phi^{s-m\frac{2p-q+1}{p-m+1}} |\nabla \phi|^{m\frac{2p-q+1}{p-m+1}}\\
	&\leq \frac{1}{4}\int_\Omega u^{2p-q+1}\phi^s +C R^{n-m\frac{2p-q+1}{p-m+1}}.
\end{align*}
It is sufficient to show that 
\begin{align}
	&n-m\frac{2p-q+1}{p-m+1}<0 \notag\\
	\Longleftrightarrow &q<\frac{2m-n}{m}p+\frac{mn+m-n}{m}. \label{eq13}
\end{align}
By direct calculation, we have
\begin{claim}
    In the region $\frac{n(m-1)}{n-m} \leq p <\frac{nm}{n-m}-1$, $ \frac{(n-1)(m-1)}{n-m}\leq q<\frac{n(m-1)}{n-m}$, the inequality  \eqref{eq13} is valid.
\end{claim}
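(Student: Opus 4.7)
The plan is to exploit the fact that inequality \eqref{eq13}, rewritten as $F(p,q) := \frac{2m-n}{m}p + \frac{mn+m-n}{m} - q > 0$, is affine in both variables. On a compact rectangle an affine function attains its minimum at a vertex; moreover $F$ is strictly decreasing in $q$, so the minimum over $[p_*,p^*]\times[q_*,q^*]$ must occur on the top edge $q=q^*$ at one of the two corners $(p_*,q^*)$ or $(p^*,q^*)$. I would therefore reduce the claim to verifying $F>0$ at these two points.

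The two corner evaluations should hinge on a single clean identity: the line $L(p) := \frac{2m-n}{m}p + \frac{mn+m-n}{m}$ passes through the point $(p^*,p^*)$, since
\begin{equation*}
p^*\Bigl(1-\tfrac{2m-n}{m}\Bigr)=p^*\cdot\tfrac{n-m}{m}=\tfrac{nm-n+m}{m}=\tfrac{mn+m-n}{m}.
\end{equation*}
Combined with the elementary relation $p^*-q^*=\frac{m}{n-m}$, this gives $F(p^*,q^*)=\frac{m}{n-m}>0$. At the other corner $p=p_*=q^*=\frac{n(m-1)}{n-m}$, I expect a direct substitution to yield $L(p_*)=\frac{m(n-1)}{n-m}$, from which $F(p_*,q^*) = \frac{m(n-1)-n(m-1)}{n-m} = \frac{n-m}{n-m} = 1 > 0$.

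By affineness, $F$ would then be bounded below on $[p_*,p^*]\times[q_*,q^*]$ by $\min\bigl\{\tfrac{m}{n-m},\,1\bigr\}>0$, and \emph{a fortiori} on the half-open subregion stated in the claim. I do not anticipate any real obstacle here: the argument reduces to an elementary linear–programming check on a rectangle, and the only substantive work is the algebraic verification of the two corner values above. The only mild subtlety is to notice the monotonicity of $F$ in $q$ at the outset, which collapses four vertex checks into two and makes it transparent that the tightest constraint sits at the Sobolev corner $(p^*,q^*)$, where the slack $\frac{m}{n-m}$ is exactly the gap $p^*-q^*$.
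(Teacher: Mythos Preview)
Your proposal is correct: the corner evaluations $F(p^*,q^*)=\frac{m}{n-m}$ and $F(p_*,q^*)=1$ are computed accurately, and the reduction to these two points via monotonicity in $q$ and affineness in $p$ is sound. The paper itself merely records the claim ``by direct calculation'' without supplying details, so your argument is exactly the explicit verification the paper omits.
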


If $2m-q-1\leq 0$, then we use the property $u(x) \geq C|x|^{-\frac{n-m}{m-1}}$ for $|x|$ large enough. One can arrive that
\begin{equation}
    \int_{\Omega} u^{\alpha-2+2m}\phi^{s-2m}|\nabla \phi|^{2m} \leq C R^{n-2m-(2m-1-q)\frac{n-m}{m-1}}.
\end{equation}
It is sufficient to show that
\begin{align}
    &n-2m-(2m-1-q)\frac{n-m}{m-1} <0 \notag \\
    \Longleftrightarrow & q<\frac{m(n-1)}{n-m}.
\end{align}
It is valid since we consider the region $q<\frac{n(m-1)}{n-m}$.

As $R \to \infty$, we can obtain $\int_\Omega u^{2p-q+1} =0$. Finally, we have proved 
the theorem.

\end{proof}


\begin{thebibliography}{99}
        \bibitem{BC14} S. Brendle and S.~S. Chen, An existence theorem for the Yamabe problem on manifolds with boundary, J. Eur. Math. Soc. (JEMS) {\bf 16} (2014), no.~5, 991--1016; MR3210959
	\bibitem{BL95}H.~R. Brezis and E.~H. Lieb, Sobolev inequalities with remainder terms, J. Funct. Anal. {\bf 62} (1985), no.~1, 73--86; MR0790771
        \bibitem{CGS89} L.~\'A. Caffarelli, B. Gidas and J. Spruck, Asymptotic symmetry and local behavior of semilinear elliptic equations with critical Sobolev growth, Comm. Pure Appl. Math. {\bf 42} (1989), no.~3, 271--297; MR0982351
        \bibitem{CL91} W. Chen and C. Li, Classification of solutions of some nonlinear elliptic equations, Duke Math. J. {\bf 63} (1991), no.~3, 615--622; MR1121147
	\bibitem{CFR20} G. Ciraolo, A. Figalli, A. Roncoroni,  {\em Symmetry results for critical anisotropic $p$-Laplacian equations in convex cones.} Geom. Funct. Anal. 30 (2020), no.3, 770-803.

\bibitem{E90} J. F. Escobar, {\em Uniqueness theorems on conformal deformation of metric, Sobolev inequalities, and an eigenvalue estimate}, Comm. Pure Appl. Math., 43 (1990), 857-883.
    
        \bibitem{Es92} J.~F. Escobar, The Yamabe problem on manifolds with boundary, J. Differential Geom. {\bf 35} (1992), no.~1, 21--84; MR1152225
        \bibitem{GNN79} B. Gidas, W.-M. Ni and L. Nirenberg, Symmetry and related properties via the maximum principle, Comm. Math. Phys. {\bf 68} (1979), no.~3, 209--243; MR0544879
	\bibitem{GS81} B. Gidas, J. Spruck, {\em Global and local behavior of positive solutions of nonlinear elliptic equations.} Comm. Pure Appl. Math. 34 (1981), no.4, 525-598. 
	\bibitem{Li95}Y.~Y. Li and M. Zhu, Uniqueness theorems through the method of moving spheres, Duke Math. J. {\bf 80} (1995), no.~2, 383--417; MR1369398
	\bibitem{Li03} Y.~Y. Li and L. Zhang, Liouville-type theorems and Harnack-type inequalities for semilinear elliptic equations, J. Anal. Math. {\bf 90} (2003), 27--87; MR2001065
	\bibitem{Ne}B. Nazaret, Best constant in Sobolev trace inequalities on the half-space, Nonlinear Anal. {\bf 65} (2006), no.~10, 1977--1985; MR2258478
	
	 \bibitem{Obata71}Obata,  {\em The conjectures on conformal transformations of Riemannian manifolds.} J. Differential Geometry 6 (1971/72), 247–258.
	 
	 
        \bibitem{Sciunzi}  B. Sciunzi, Classification of positive $D^{1,p}(\Bbb{R}^N)$-solutions to the critical $p$-Laplace equation in $\Bbb{R}^N$, Adv. Math. {\bf 291} (2016), 12--23; MR3459013

\bibitem{S1970} J. Serrin, On the strong maximum principle for quasilinear second order differential inequalities, J. Funct. Anal., {\bf 5} (1970), 184-193.
        
	\bibitem{Serrin02} J.~B. Serrin Jr. and H. Zou, Cauchy-Liouville and universal boundedness theorems for quasilinear elliptic equations and inequalities, Acta Math. {\bf 189} (2002), no.~1, 79--142; MR1946918
	
        \bibitem{Vetois} J. V\'etois, A priori estimates and application to the symmetry of solutions for critical $p$-Laplace equations, J. Differential Equations {\bf 260} (2016), no.~1, 149--161; MR3411668
	\bibitem{Zhou24} Yang Zhou, Classification Theorem For Positive Critical Points Of Sobolev Trace Inequality. arXiv:2402.17602.
	\end{thebibliography}
\end{document}